\documentclass[leqno,12pt]{article}
\usepackage{amsmath,amssymb,amsthm,mathtools,a4wide}
\usepackage{microtype,booktabs,array,needspace}
\usepackage{color}
\usepackage{tikz}
\usetikzlibrary{arrows.meta}
\usepackage[backend=biber,style=numeric,isbn=false,maxnames=15]{biblatex}
\DefineBibliographyStrings{english}{pages = {}, page = {}}
\usepackage{xurl}
\usepackage{hyperref}
\hypersetup{
  hidelinks,
  pdftitle={Linear crossing families via dual levels},
  pdfauthor={Tim Gehrunger}
}

\newcommand{\Bigskip}{\bigskip\bigskip}
\let\le\leqslant
\let\ge\geqslant

\let\phi\varphi
\let\theta\vartheta
\let\epsilon\varepsilon
\let\setminus\smallsetminus

\newtheorem{theorem}{Theorem}[section]
\newtheorem{proposition}[theorem]{Proposition}
\newtheorem{lemma}[theorem]{Lemma}
\newtheorem{corollary}[theorem]{Corollary}
\theoremstyle{remark}
\newtheorem*{remark}{Remark}
\makeatletter
\let\c@equation\c@theorem
\makeatother

\makeatletter
\renewenvironment{proof}[1][\proofname]{\par
  \pushQED{\qed}%
  \normalfont \topsep6\p@\@plus6\p@\relax
  \trivlist
  \item[\hskip\labelsep\bfseries #1\@addpunct{.}]\ignorespaces
}{%
  \popQED\endtrivlist\@endpefalse
}
\makeatother

\DeclareMathOperator{\conv}{conv}
\newcommand{\cR}{\mathcal R}
\newcommand{\cB}{\mathcal B}
\newcommand{\cF}{\mathcal F}
\newcommand{\eps}{\varepsilon}

\title{\strut
  \vskip-80pt
  Linear crossing families via dual levels
}
\author{
  \begin{minipage}[t]{.43\textwidth}
    \centering
    Tim Gehrunger\\[12pt]
    \small Department of Mathematics \\
    ETH Z\"urich\\
    8092 Z\"urich\\
    Switzerland \\
    tim.gehrunger@math.ethz.ch
  \end{minipage}
}
\date{\today}

\begin{document}
\maketitle
\Bigskip

\begin{abstract}
Building on the work of Pach, Rubin, and Tardos, we prove that every set of $n\ge2$ points in the real plane with no three collinear contains at least $cn$ pairwise crossing segments with distinct endpoints, for an absolute constant $c>0$.

\end{abstract}

\section{Introduction}\label{sec:intro}
How many pairwise crossing segments must a planar point set in general position determine? Here, \emph{general position} means that no three points are collinear. A \emph{crossing family} consists of segments joining points of the set, with all endpoints distinct, any two of which intersect in their relative interiors. Since each segment uses two points, an $n$-point set has no crossing family larger than $\lfloor n/2\rfloor$. Aronov et al.~\cite{AEGK91,AEGK} introduced crossing families in 1991 and conjectured that every such set contains a crossing family of size proportional to $n$; see also~\cite[Problem~1]{ABV}. They also  proved the lower bound $\Omega(\sqrt n)$. Later, Pach, Rubin, and Tardos~\cite[Theorem~1.1(i)]{PRT} proved the improved bound $n/2^{O(\sqrt{\log n})}$. In this article, we remove the subpolynomial loss and establish the conjectured linear order of growth:

\begin{theorem}
\label{thm:main}
There is an absolute constant $c>0$ such that every set of $n\ge2$ points in the plane in general position contains a crossing family of size at least $cn$.
\end{theorem}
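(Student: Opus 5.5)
We follow the Pach--Rubin--Tardos strategy and replace the step that loses the factor $2^{O(\sqrt{\log n})}$ with a linear-loss argument based on levels in the dual line arrangement. The key notion is that of two point sets $A$ and $B$ that \emph{avoid} each other: every line through two points of $A$ misses $\conv B$, and vice versa. If $A$ and $B$ avoid each other and $|A|=|B|=m$, then matching $A$ and $B$ in the correct cyclic order gives a crossing family of size $m$. The reason is that, viewed from $A$, the points of $B$ appear in a linear angular order that is consistent for all points of $A$, and conversely; the $i$-th point of $A$ is then joined to the $i$-th point of $B$. So it suffices to find two mutually avoiding subsets of linear size.

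Step 1 is to split the set $P$ by a line into halves $L$ and $R$ of size $n/2$. Step 2 is to dualize: points become lines, and for each point $p\in L$ its angular order of $R$ becomes the order in which the dual line $p^*$ crosses the lines dual to $R$. For a set $A\subseteq L$ and a set $B\subseteq R$, mutual avoidance means that the lines of $A^*$ stay within a fixed range of levels of the arrangement $R^*$ over the relevant $x$-interval, and symmetrically. I would take the $k$-levels of $R^*$ for $k$ near $n/4$ as ``dual levels'' separating $R$ into a portion lying above a chosen level and a portion lying below it. Step 3 is to find, by a ham-sandwich or cutting-type argument, a constant-fraction subset $A\subseteq L$ whose dual lines all cross a single band between two levels $k_1<k_2$ with $k_2-k_1=\Theta(n)$ in a consistent manner. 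The aim is that the lines of $R^*$ lying strictly inside the band (call this set $B$) are ordered identically as seen from every point of $A$. Step 4 applies the same construction symmetrically to shrink $A$ so that it is ordered identically as seen from $B$, with constant-factor losses only.

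The main obstacle is Step 3. Pach--Rubin--Tardos iterate a partition step in which each round loses a constant factor, and the number of rounds grows with $n$. The first approach I would try is to show that a single round suffices, by using the complexity of a $k$-level (the $k$-set bound) only in an averaged form. The idea is to choose the band randomly among $\Theta(n)$ consecutive levels. For a random band, the expected number of pairs $(a,b)\in L\times R$ in which $a^*$ crosses the band ``badly'' relative to $b^*$ is small, because each line of $L^*$ meets only $O(1)$ levels of the band in a bad way on average. Deleting the bad pairs by a Markov/Turán-type cleanup then leaves $\Omega(n)$ points on each side. If the averaging over levels fails, the fallback is a density-increment argument: either we find mutually avoiding sets of linear size, or we pass to a subconfiguration where the relevant level structure is denser by a constant factor. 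Such an increment can happen only $O(1)$ times before reaching a contradiction, so the total loss remains a constant factor. This yields $cn$ for some absolute $c>0$.
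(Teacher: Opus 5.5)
\textbf{There is a genuine gap.} Your reduction ``it suffices to find two mutually avoiding subsets of linear size'' aims at a statement that is false in general. Linear-size mutually avoiding pairs need not exist: there are $n$-point sets in general position in which every mutually avoiding pair has a part of size $O(\sqrt n)$ (Valtr). So the $\sqrt{n/12}$ bound of Aronov et al.\ for mutually avoiding sets is tight up to a constant, and this is exactly why their method stops at $\sqrt n$.

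Steps~3--4 therefore cannot succeed, whatever their details. They are also not yet an argument on their own terms. The random band has no proof that a line meets only $O(1)$ band levels badly on average. The density-increment fallback names no bounded quantity that grows by a constant factor. Finally, your Markov/Tur\'an cleanup of bad pairs would at best give a \emph{nearly} avoiding pair: only a few pairs $\{a,a'\}$ with $\ell(a,a')\cap\conv(B)\ne\varnothing$, and vice versa. Your crossing-family lemma, matching in the common angular order, needs exact avoidance, so you would have no way to finish from there.

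\textbf{What is missing: a robust version of that lemma.} You need to get a crossing family of size $\Omega(m)$ from separated $m$-point sets whose avoidance defect $t(A,B)$ is merely at most $m^2/4$. Your diagnosis of where Pach--Rubin--Tardos lose is also off. Their Lemma~3.3 already gives such near-avoiding pairs of linear size for fixed $\eps$, with only polynomial dependence on $\eps$. The paper uses it unchanged with $\eps=1/4$ and improves only the second stage.

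That second stage is Proposition~1.2, proved as follows.
\begin{enumerate}
\item Dualize so that red lines (from $A$) have negative slopes and blue lines (from $B$) have positive slopes.
\item Split the $m^2$ red--blue intersections into the $2m-1$ levels.
\item At each level $k$, mark a red line at every defective red--red intersection where it leaves $L_k$. Mark a blue line at every defective blue--blue intersection where it enters $L_k$.
\item A line reappearing at the same level, or a violation of the crossing order between two retained intersections, forces exactly such a marked transition. Hence the clean intersections at a level have distinct endpoints and cross pairwise.
\item A marked line $\lambda$ satisfies $d_k(\lambda)\le 1+u_k(\lambda)\le 2u_k(\lambda)$. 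Each defect lies on exactly one level, so at most $2t(A,B)$ segments are deleted in total.
\item By pigeonhole, some level retains at least $(m^2-2t(A,B))/(2m-1)\ge m/4$ pairwise crossing segments.
\end{enumerate}
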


\paragraph{AI use and formalization.}
The proof of Theorem \ref{thm:main} was obtained using an updated version of ProofCouncil \cite{ProofCouncil} during preparations for the third batch of First Proof \cite{FirstProof}. The author checked and revised the system’s output, which is available in \cite{InitialVersion}. Theorem 1.1 has been formalized in Lean using Codex \cite{LeanCode}. The formalization contains an unconditional proof of Theorem \ref{thm:main}, but does not include the applications to plane trees and spoke sets. Codex was also used to assist with drafting and revising this manuscript. The ProofCouncil run used GPT-6 Astra as its author agent and the recorded model-API cost  was \$32.36. These costs were covered by Swiss National Science Foundation grant 10009122, “Beyond Benchmark Scores: Analyzing AI Reasoning on Research-Level Mathematics”.

\paragraph{Overview.}
We deduce Theorem~\ref{thm:main} from a decomposition result for two separated point sets. Here $A,B$ are \emph{separated} if their convex hulls $\conv(A),\conv(B)$ are disjoint, and a \emph{joining segment} has one endpoint in each set. Let $A,B$ be separated $m$-point sets, where $m\ge1$, whose union is in general position. Their \emph{avoidance defect} is
\begin{equation}\label{eq:defect}
\begin{split}
t(A,B)={}&\bigl|\{\{a,a'\}\in\tbinom A2:
          \ell(a,a')\cap\conv(B)\ne\varnothing\}\bigr|\\
       &+\bigl|\{\{b,b'\}\in\tbinom B2:
          \ell(b,b')\cap\conv(A)\ne\varnothing\}\bigr|.
\end{split}
\end{equation}
Here $\ell(p,q)$ is the supporting line through $p,q$, and $\binom X2$ denotes the unordered two-element subsets of $X$.

\begin{proposition}
\label{prop:deletion}
Let $m\ge1$ be an integer, and let $A,B$ be separated $m$-point sets in the plane whose union is in general position. After deleting at most $2t(A,B)$ of the $m^2$ segments joining $A$ to $B$, the remaining segments can be partitioned into $2m-1$ possibly empty crossing families. In particular, one such family has size at least
\begin{equation}\label{eq:bound}
\frac{m^2-2t(A,B)}{2m-1}.
\end{equation}
If $t(A,B)\le m^2/4$, there is a crossing family of size at least $m/4$.
\end{proposition}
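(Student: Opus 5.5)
The plan is to make the dual-level picture explicit. After a rotation we may assume a vertical line separates the sets, with $A$ on the left and $B$ on the right. For a point $b\in B$, sort $A$ angularly as seen from $b$, say by the slope of $\ell(a,b)$, and let $\rho_b(a)\in\{1,\dots,m\}$ be the rank of $a$. Symmetrically, $\sigma_a(b)$ is the rank of $b$ in the slope order seen from $a$. In the dual plane points become lines, and these ranks are the levels of the dual lines along the relevant vertical lines. We put the segment $ab$ into class $s=\rho_b(a)+\sigma_a(b)\in\{2,\dots,2m\}$. This gives exactly $2m-1$ classes, which is where the number $2m-1$ in Proposition~\ref{prop:deletion} should come from.

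The first key step is the \emph{mutually avoiding} case $t(A,B)=0$. If no line $\ell(a,a')$ meets $\conv(B)$, then every $b\in B$ lies on the same side of $\ell(a,a')$. Hence all points of $B$ see $a$ and $a'$ in the same angular order, so $\rho_b$ is independent of $b$; symmetrically $\sigma_a$ is independent of $a$. Write $A=\{a_1,\dots,a_m\}$ and $B=\{b_1,\dots,b_m\}$ in these global orders, oriented so that increasing slope on both sides corresponds to going "up" on one side and "down" on the other. The classical argument then shows that $a_ib_j$ and $a_kb_l$ cross whenever $i<k$ and $j>l$, using that the four points are in convex position with alternating order. Each class $\{a_ib_j: i+j=s\}$ uses each point at most once and pairs increasing $i$ with decreasing $j$, so it is a crossing family. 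This recovers the $t=0$ case with no deletions.

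The second step, which I expect to be the main obstacle, is the charging argument for general $t$. Order $A$ by a fixed reference, for instance by the slope order seen from a point at the far right of the separating line, and order $B$ similarly. We delete a segment $ab$ whenever its actual ranks $(\rho_b(a),\sigma_a(b))$ differ from its reference ranks. If two segments in one class have reference ranks $(i,j)$ and $(k,l)$ with $i+j=k+l$, they cross by the crossing criterion of the previous paragraph, provided the four points are in the "avoiding" configuration; this configuration is guaranteed by local consistency of ranks. To bound deletions, consider how a rank $\rho_b(a)$ changes as $b$ moves. It changes only when $b$ crosses a line $\ell(a,a')$ that meets $\conv(B)$, and each such crossing swaps two adjacent ranks. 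I would try to show that each bad pair $\{a,a'\}$ forces at most two segments out of their correct class, namely one segment at $a$ and one at $a'$. This requires choosing the class assignment cleverly, perhaps via dual level sets instead of the naive sum, so that a single swap displaces only boundedly many segments rather than all segments from $a$ to the points of $B$ beyond the line. If the naive rule fails, the fallback is to define classes directly as the $s$-th level in the dual arrangement restricted to the interval dual to $\conv(B)$. A crossing of two dual lines inside that interval, which is exactly a bad pair, then perturbs the level structure at only two places.

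Finally, the quantitative consequences are immediate. At least $m^2-2t(A,B)$ segments survive and are split into $2m-1$ crossing families, so by pigeonhole some family has size at least \eqref{eq:bound}. If $t(A,B)\le m^2/4$, then \eqref{eq:bound} is at least $\frac{m^2/2}{2m-1}\ge\frac{m^2/2}{2m}=m/4$.
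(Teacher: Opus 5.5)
There is a genuine gap: the general case is not proved. Your treatment of $t(A,B)=0$ and the closing pigeonhole computation are correct. Your classes are also the right ones: with both rankings taken top-to-bottom, $\rho_b(a)+\sigma_a(b)-1$ equals one plus the number of points of $A\cup B$ above $\ell(a,b)$. That is exactly the dual level of $a^*\cap b^*$ used in the paper.

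The general case is the entire content of the proposition, and you only describe what you ``would try to show''. The concrete rule you propose fails in two ways.

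\textbf{The deletion count.} Deleting $ab$ whenever its actual ranks differ from fixed reference ranks can cost far more than $2t(A,B)$. Suppose $A$ and $B$ are mutually avoiding except for one pair $\{a,a'\}\subseteq A$ whose line cuts $\conv(B)$ roughly in half. Then $t(A,B)=1$. But every $b$ on the side of $\ell(a,a')$ that disagrees with the reference sees $a$ and $a'$ swapped. So both $ab$ and $a'b$ are deleted for each such $b$, about $m$ deletions in total.

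\textbf{The crossing claim.} Knowing that each of $a_ib_j$ and $a_kb_l$ carries its own reference ranks says nothing about how $b_j$ ranks $a_k$ against $a_i$. Nor does it say how $a_i$ ranks $b_l$ against $b_j$. These comparisons are exactly what the side test needs.

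\textbf{The fallback.} It is not a well-defined construction. In the dual, the defective red--red intersections are those lying between the lower and upper envelopes of the blue lines, not in an interval. The assertion that a bad pair ``perturbs the level structure at only two places'' is precisely what has to be proved.

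The missing idea is a per-level sweep with marking. Keep the full levels as classes, fix $k$, and sweep the set $L_k(x)$ of the $k$ lowest dual lines. Since red slopes are negative and blue slopes positive, at every red--blue level-$k$ intersection the red line enters $L_k$ and the blue line leaves it.

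Hence a red line occurring at two consecutive red--blue level-$k$ intersections must exit $L_k$ in between. This can only happen at a red--red level-$k$ intersection. Its two blue partners sandwich it throughout that interval, so this intersection is defective. Blue lines are symmetric, with entrances in place of exits.

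Now mark the exiting red line (resp.\ the entering blue line) at every defective same-color level-$k$ intersection. Discard all red--blue level-$k$ intersections on marked lines. This gives $d_k(\lambda)\le 1+u_k(\lambda)\le 2u_k(\lambda)$ for marked lines, hence at most $2T_k$ deletions at level $k$. Summing over levels gives at most $2t(A,B)$ deletions, since each defect lies at exactly one level.

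The same sandwich-and-transition argument yields the four strict order relations at any two clean intersections. These translate directly into the side conditions for crossing.

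Note that the charging is not ``one segment at $a$ and one at $a'$''. Each defect marks a single line, and the deletions it pays for lie on that line, possibly earlier in the sweep than the defect itself.
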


Pach, Rubin, and Tardos~\cite[Lemma~3.3]{PRT} proved that every sufficiently large general-position set of $n$ points contains separated subsets $A,B$ of common size $m$ proportional to $n$, with $t(A,B)\le m^2/4$. Applying Proposition~\ref{prop:deletion} to these sets, we obtain a crossing family of size at least $m/4$. 
%Thus a fixed bound on the proportion of defective pairs suffices. 
Our contribution is to obtain a linear-size crossing family from a fixed near-avoidance bound.

To prove Proposition \ref{prop:deletion}, we replace each point $(r,s)$ by the line $y=rx-s$ in a second plane, coloring lines from $A$ red and lines from $B$ blue. Each joining segment is represented by the intersection of its endpoints' two lines. We group these intersections according to how many lines lie below them, obtaining $2m-1$ levels. Each pair counted by the avoidance defect also corresponds to an intersection: one of two same-color lines, with opposite-color lines both above and below it.  Thus each defect belongs to exactly one level.

During a left-to-right sweep at level $k$, repeated appearances of a line would give segments with a common endpoint. Between two such appearances, however, the line must enter or leave the set of the $k$ lowest lines at a defective same-color intersection. We use these transitions to mark lines and discard all their red--blue intersections at that level. Using the same transition argument, we show that the surviving intersections have the relative order required for pairwise crossing. We bound the number discarded by twice the number of defects at the level. Since each defect belongs to exactly one level, at most $2t(A,B)$ segments are deleted.

\paragraph{Structure of the paper.}
{In Section~\ref{sec:extraction}, we deduce Theorem~\ref{thm:main} from Proposition~\ref{prop:deletion} using the result of Pach, Rubin, and Tardos, and record applications to plane trees and spoke sets. We set up duality and the defect correspondence in Section~\ref{sec:dual}, bound deletions in Section~\ref{sec:level}, and prove the crossing property in Section~\ref{sec:crossing}, completing the proof of Proposition~\ref{prop:deletion}.}

%\newpage
\section{From near-avoidance to a linear bound}\label{sec:extraction}
We use the following form of \cite[Lemma~3.3]{PRT}, expressed in terms of the avoidance defect.
\begin{lemma}
\label{lem:prt-extraction}
There is an absolute constant $K_0>0$ such that, for every positive integer $m$ and every $1>\eps>0$ and $\delta>0$, a geometric graph $G=(V,E)$ whose vertices are in general position and which satisfies
\[
|V|\ge\frac{K_0m}{\eps^4\delta^5},
\qquad |E|\ge\delta|V|^2
\]
contains separated subsets $A,B\subseteq V$ with
\[
|A|=|B|=m,\qquad t(A,B)\le\eps m^2,
\]
and at least $\delta m^2$ edges joining $A$ to $B$.
\end{lemma}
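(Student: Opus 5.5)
This lemma is a restatement of \cite[Lemma~3.3]{PRT}, so the plan is to translate rather than reprove. First I would recall the exact formulation in \cite{PRT}. There, for a geometric graph with $|V|$ vertices in general position and at least $\delta|V|^2$ edges, one obtains two separated vertex sets $A,B$ of equal size $m$ with at least $\delta m^2$ (or a fixed fraction of that) edges between them. One also gets an ``$\eps$-avoiding'' property: all but an $\eps$-fraction of the pairs inside $A$ span lines missing $\conv(B)$, and symmetrically for $B$. The required vertex bound there is of the form $|V|\ge K_0 m/(\eps^{c_1}\delta^{c_2})$. The first step is to check that the exponents in their statement are at most $4$ and $5$. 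Since $\eps<1$ and we may assume $\delta\le1$ (otherwise $|E|\ge\delta|V|^2$ is impossible), any smaller exponents are dominated by ours, so the hypothesis here implies theirs.

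The second step is the translation of the avoidance notion into the defect $t(A,B)$ of \eqref{eq:defect}. If \cite{PRT} guarantees that at most $\eps\binom m2$ pairs in $A$ have lines meeting $\conv(B)$, and likewise for $B$, then
\[
t(A,B)\le 2\eps\tbinom m2\le\eps m^2 .
\]
If their constant is normalized differently, say $\eps m^2$ per side, I would apply their lemma with $\eps/2$ in place of $\eps$. This changes the vertex threshold only by the factor $2^4$, which I would absorb into $K_0$.

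The third step is a sanity check on the remaining quantitative pieces. The sets must have size exactly $m$; if \cite{PRT} produces larger sets, I would pass to random $m$-subsets, for which both edge density and defect density are preserved in expectation. Choosing a subset pair that is simultaneously good for both quantities needs a first-moment argument with slack constants, again absorbed into $K_0$ and the edge bound. Separation and general position are inherited by subsets.

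The main obstacle I expect is purely bookkeeping. Their lemma may state the avoidance condition as ``$A$ and $B$ are $\eps$-far/avoiding'' via a point-in-half-plane formulation rather than via lines meeting hulls. So I would verify the equivalence: $\ell(a,a')\cap\conv(B)=\varnothing$ iff $B$ lies entirely on one side of $\ell(a,a')$. With that equivalence in hand, the conversion above is routine.
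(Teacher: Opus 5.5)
Your plan matches the paper, which simply takes \cite[Lemma~3.3]{PRT} and restates it. PRT's $\eps$-avoiding condition (at most $\eps m^2$ unordered incomparable pairs in the two associated partial orders, \cite[Definition~3.1]{PRT}) is literally $t(A,B)\le\eps m^2$, because a pair is incomparable precisely when its supporting line meets the convex hull of the opposite set \cite[Lemma~2.2]{PRT}; this is essentially the equivalence you planned to check.

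The paper uses PRT's lemma in exactly this form (sets of size exactly $m$, at least $\delta m^2$ joining edges, the same exponents), so your rescaling and subsampling contingencies are not needed. That is just as well: the conclusion must hold with the same $\delta$ as the hypothesis, so a constant-factor loss in the edge count from subsampling could not be absorbed into $K_0$.
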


In \cite{PRT}, the sets $A$ and $B$ form an $\eps$-avoiding pair: the total number of unordered incomparable pairs in two associated partial orders is at most $\eps m^2$~\cite[Definition~3.1]{PRT}. A pair is incomparable precisely when its supporting line meets the convex hull of the opposite set \cite[Lemma~2.2]{PRT}. This count is therefore exactly $t(A,B)$.

\begin{corollary}
\label{thm:extraction}
There is an absolute constant $K\ge2$ such that, for every $0<\eps<1$, every positive integer $m$, and every general-position point set $P$ of cardinality
\[
n\ge K\eps^{-4}m
\]
in the real plane, there are separated subsets $A,B\subseteq P$ satisfying
\[
|A|=|B|=m,\qquad t(A,B)\le\eps m^2.
\]
\end{corollary}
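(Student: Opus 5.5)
We will deduce the corollary from Lemma~\ref{lem:prt-extraction} by applying it to the complete geometric graph on $P$. The idea is that the complete graph has density bounded away from zero, so $\delta$ can be fixed to an absolute constant. The only requirement left in the lemma is then the size condition on $|V|$, which we absorb into the constant $K$.

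Concretely, let $G=(P,E)$ with $E=\binom P2$, so $|V|=n$ and $|E|=n(n-1)/2$. For $n\ge2$ we have $n-1\ge n/2$, hence $|E|\ge n^2/4$, and we may take $\delta=1/4$. The size condition of the lemma becomes
\[
n\ge \frac{K_0m}{\eps^4\delta^5}=4^5K_0\,\eps^{-4}m .
\]
So we set $K=\max\{2,\,4^5K_0\}$. The hypothesis $n\ge K\eps^{-4}m$ then gives the size condition directly. It also gives $n\ge 2m\ge2$, because $\eps<1$ and $m\ge1$, so the density estimate applies. The lemma now yields separated $A,B\subseteq P$ with $|A|=|B|=m$ and $t(A,B)\le\eps m^2$. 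We simply discard the extra conclusion about $\delta m^2$ joining edges.

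There is no real obstacle here. The only points to check are that $n\ge2$, which is needed for the density bound $|E|\ge n^2/4$, and that $K$ is absolute, i.e.\ independent of $\eps$ and $m$; both follow from the choices above. General position of $P$ is inherited by the vertex set of $G$.
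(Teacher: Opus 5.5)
Your proposal is correct and follows the paper's proof exactly: apply Lemma~\ref{lem:prt-extraction} to the complete geometric graph on $P$ with $\delta=1/4$ and $K=\max\{2,4^5K_0\}$, using $n\ge2$ to get $\binom n2\ge n^2/4$. You also explicitly derive $n\ge 2m\ge 2$ from $\eps<1$, $m\ge1$ and $K\ge2$, whereas the paper only states that $n\ge2$ holds under the hypothesis.
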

\begin{proof}
 Apply Lemma \ref{lem:prt-extraction} to the complete geometric graph on $P$, with $\delta=1/4$ and
\[
K=\max\{2,4^5K_0\}.
\]
Under the hypothesis, we have $n\ge2$, so this graph has $\binom n2\ge n^2/4$ edges. With this choice of $K$, the size condition also holds, and we obtain separated $m$-point sets with $t(A,B)\le\eps m^2$.
\end{proof}

\begin{proof}[Proof of Theorem~\ref{thm:main}]
Fix $\epsilon=1/4$, take $K$ from Corollary~\ref{thm:extraction}, choose an integer
\[
H\ge K4^4,
\]
and set $c=1/(8H)>0$. Since $K\ge2$, we have $H\ge2\cdot4^4\ge2$.

Let $P$ be a general-position set of $n\ge2$ points in the plane. First suppose that $n\ge2H$, and put $m=\lfloor n/H\rfloor\ge2$. Then
\[
n\ge Hm\ge K4^4m,\qquad
m\ge\frac nH-1\ge\frac n{2H}.
\]
Applying Corollary~\ref{thm:extraction} for $\eps=1/4$, we get separated subsets $A,B\subseteq P$ satisfying
\[
|A|=|B|=m,\qquad t(A,B)\le\frac{m^2}{4}.
\]
By Proposition~\ref{prop:deletion}, there is a crossing family of size at least
\[
\frac m4\ge\frac n{8H}=cn.
\]

If $2\le n<2H$, choose any two points of $P$. Their segment is a crossing family of size one, and
\[
1\ge\frac n{8H}=cn.
\]
\end{proof}

For a point set $P$, write $K(P)$ for the complete geometric graph on $P$. A tree is \emph{plane} if no two of its edges cross. In an edge partition into plane trees, each edge belongs to exactly one tree. Note that here the trees may share vertices and need not be spanning. Bose, Hurtado, Rivera-Campo, and Wood~\cite[Problem~16]{BHRW} asked whether every complete geometric graph on $n$ vertices admits a partition into at most $(1-\eps)n$ plane subgraphs for some absolute constant $\eps>0$. We can now answer this question affirmatively, with trees as the parts.

\begin{corollary}
\label{cor:plane-tree-partition}
There is an absolute constant $\eps>0$ such that, for every set $P$ of $n\ge2$ points in the plane in general position, the edges of $K(P)$ can be partitioned into at most $(1-\eps)n$ plane trees.
\end{corollary}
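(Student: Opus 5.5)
The plan is to combine Theorem~\ref{thm:main} with the reduction of Bose, Hurtado, Rivera-Campo, and Wood~\cite{BHRW}. They derived their bound of $n-\sqrt{n/12}$ plane trees from the Aronov et al.\ crossing-family bound $\sqrt{n/12}$. I expect their argument to give the following more general statement, and I would first check that it is stated or proved there in this form: if a general-position set $P$ of $n$ points contains a crossing family of size $k$, then the edges of $K(P)$ can be partitioned into at most $n-k$ plane trees. Granting this, Theorem~\ref{thm:main} gives $k\ge cn$, hence at most $(1-c)n$ plane trees, and we may take $\eps=c$. For $n=2$ the bound reads $1\le(1-c)\cdot2$, which holds since $c\le1/2$. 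For small $n$ in general, any crossing family of size at least one already suffices.

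If the reduction is not available in exactly this form, I would reprove it. The baseline is the partition into $n-1$ plane stars: order the points $p_1,\dots,p_n$ and let the $i$-th star consist of the edges from $p_i$ to all $p_j$ with $j>i$. Each star is plane and is a tree, and together the stars cover every edge exactly once. To save $k$ trees, take the crossing family $a_1b_1,\dots,a_kb_k$. For each $i$, merge the two stars centred at $a_i$ and $b_i$ into a single plane tree $T_i$ built around the segment $a_ib_i$. The points on one side of the line $\ell(a_i,b_i)$ are joined to $a_i$, and the points on the other side are joined to $b_i$. Such a double star is plane because all its edges lie in the two closed half-planes and emanate from the two endpoints of a common segment.

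Processing the pairs in a suitable order, the edges at $a_i$ and $b_i$ not already used by earlier trees are then to be covered by $T_i$ together with the remaining $n-2k$ stars. This yields $k+(n-2k)=n-k$ trees, or $n-k-1$ after also dropping the last star, which is empty.

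The main obstacle is the covering step just described. The double star $T_i$ joins each point to only one of $a_i,b_i$, so the complementary edges (from $a_i$ to the far side of the line, and from $b_i$ to the near side) must be absorbed elsewhere without creating crossings. This is exactly where the pairwise crossing property is needed. Since the segments pairwise cross, their endpoints alternate in a consistent cyclic order around the common crossing region. My first attempt would assign each complementary edge at $a_i$ or $b_i$ to the tree $T_j$ of a neighbouring segment in this cyclic order, and then verify planarity using the fact that the lines $\ell(a_j,b_j)$ rotate monotonically. If this proves delicate, I would simply cite the theorem of~\cite{BHRW} relating crossing families to plane-tree partitions and restrict the proof to the one-line deduction above.
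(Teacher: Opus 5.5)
Your main argument is the paper's proof: the paper cites \cite[Lemma~14]{BHRW}, which says that a complete geometric graph on $n$ vertices with a crossing family of size $k$ can be edge-partitioned into $n-k$ plane trees, and then takes $\eps=c$ from Theorem~\ref{thm:main}. The fallback reconstruction is therefore unnecessary, but its aside about reaching $n-k-1$ trees is false in general (for $2k+1$ points in convex position there is a crossing family of size $k$, yet $k$ trees have at most $2k^2<\binom{2k+1}{2}$ edges); moreover, the complementary edges at $a_i,b_i$ cannot go into another spanning double star $T_j$, and the edge of $\{xa_i,xb_i\}$ missing from $T_i$, for each point $x$ outside the family, has to be absorbed by the star centred at $x$, so no leftover star is empty.
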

\begin{proof}
Bose et al.~\cite[Lemma~14]{BHRW} proved that a complete geometric graph on $n$ vertices containing a crossing family of size $k$ admits an edge partition into $n-k$ plane trees. By Theorem~\ref{thm:main}, we can choose such a family with $k\ge cn$. The required partition therefore has at most $n-k\le(1-c)n$ trees, and we take $\eps=c$.
\end{proof}

We next consider \emph{packings} of plane spanning trees. Here every tree contains all vertices, and no edge belongs to two trees, but some edges of the complete graph may remain unused. Biniaz and Garc\'ia~\cite{BG} proved that every complete geometric graph on $n$ vertices in general position contains at least $\lfloor n/3\rfloor$ edge-disjoint plane spanning trees. We obtain a linear bound while requiring each tree to have diameter at most three. Here the diameter of a tree is the maximum number of edges on a path.

\begin{corollary}
\label{cor:spanning-double-stars}
There is an absolute constant $c>0$ such that, for every set $P$ of $n\ge2$ points in the plane in general position, the graph $K(P)$ contains at least $cn$ edge-disjoint plane spanning trees, each of diameter at most three.
\end{corollary}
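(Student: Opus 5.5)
Given a crossing family of size $k\ge c'n$ from Theorem~\ref{thm:main}, I will build $k$ edge-disjoint plane spanning double stars, one per segment. So fix a crossing family $\{a_1b_1,\dots,a_kb_k\}$ in $P$, with all $2k$ endpoints distinct, and let $R=P\setminus\{a_i,b_i\}_{i}$. For the $i$-th segment, $T_i$ consists of the edge $a_ib_i$ together with an edge from every other point of $P$ to $a_i$ or to $b_i$. Such a tree is spanning and has diameter at most three. It is plane if each point $p$ is joined to the endpoint whose edge stays on $p$'s side of the line $\ell(a_i,b_i)$ without crossing $a_ib_i$. In fact two edges from distinct points $p,q$ to $a_i$ and $b_i$ respectively can only cross if $p,q$ lie on the same side of $\ell(a_i,b_i)$. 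The concrete rule I would use first is the standard one for double stars: order the points on each side of $\ell(a_i,b_i)$ angularly and split each side at a single threshold. Points before the threshold go to $a_i$ and the rest go to $b_i$. This gives a plane double star for any choice of thresholds.

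The real issue is edge-disjointness. Two trees $T_i,T_j$ share an edge only if some edge of $K(P)$ is used by both. The segments $a_ib_i$ themselves are distinct. An edge $pa_i$ with $p\notin\{a_j,b_j\}$ cannot lie in $T_j$, since every non-central edge of $T_j$ has an endpoint in $\{a_j,b_j\}$. So conflicts occur only on edges among the $2k$ endpoints: $T_i$ may use $a_ia_j$ (say) while $T_j$ also uses $a_ja_i$. This has to be prevented for the $O(k^2)$ endpoint pairs. My plan is to exploit the crossing structure. All segments pairwise cross, so the endpoints are in convex position and alternate as $a_1,\dots,a_k,b_1,\dots,b_k$ around their hull, after relabelling (this is a known fact for crossing families). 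Taking a subfamily of half the size if needed, I would orient the trees so that, for each pair $i<j$, exactly one of $T_i,T_j$ takes the four edges between $\{a_i,b_i\}$ and $\{a_j,b_j\}$. The rule I would try is that $T_i$ connects $a_j,b_j$ to $a_i,b_i$ via the edges "going clockwise" in the cyclic order. Planarity inside each $T_i$ should then follow from the angular-threshold picture, because the endpoints on each side of $\ell(a_i,b_i)$ form a contiguous arc.

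Concretely, in $T_i$ I attach $a_j$ for $j>i$ to $a_i$, and $b_j$ for $j>i$ to $b_i$. On the other side I attach $a_j$ for $j<i$ to $b_i$, and $b_j$ for $j<i$ to $a_i$. I then check in the cyclic picture that these four families of edges are pairwise non-crossing. For the pair $\{a_i,a_j\}$ with $i<j$, the edge $a_ia_j$ is used by $T_i$. In $T_j$, the point $a_i$ is attached to $b_j$, so $T_j$ does not use it. The pairs $\{b_i,b_j\}$ behave in the same way. For the pair $\{a_i,b_j\}$ with $i<j$, the edge $a_ib_j$ is not used by $T_i$, since $b_j$ goes to $b_i$. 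In $T_j$, $a_i$ goes to $b_j$, so this edge is used only by $T_j$. The pairs $\{b_i,a_j\}$ behave in the same way. So the endpoint edges are disjoint across trees. The points of $R$ are then attached by the threshold rule, and these edges are automatically disjoint across trees.

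The main obstacle is verifying planarity of each $T_i$ once $R$-points and endpoint-points are mixed on one side of $\ell(a_i,b_i)$. On such a side, the $a_j$ ($j>i$) lie on one arc and the $b_j$ ($j<i$) on the adjacent arc, both going to $a_i$, and similarly on the other side. I would choose each threshold so that it is consistent with the forced endpoint assignments. If an $R$-point lies angularly between forced points in a way that breaks consistency, I would fall back on the explicit convex-position geometry: points going to $a_i$ are exactly those from which $a_i$ is visible before $b_i$ in the angular order. If necessary, I would pass to a constant fraction of the family, for example every other index, to obtain the separation. This still gives $cn$ trees with $c=c'/2$.
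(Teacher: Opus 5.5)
Your final assignment rule is the right one, but the justification has two genuine gaps. First, the ``known fact'' you rely on is false for $k\ge3$. The segments from $(-1,0)$ to $(1,0)$, from $(0,-1)$ to $(0,1)$, and from $(\tfrac1{10},\tfrac1{10})$ to $(-\tfrac12,-\tfrac25)$ are in general position and cross pairwise, yet the last two endpoints lie inside $\conv\{(\pm1,0),(0,\pm1)\}$. So there is no hull order $a_1,\dots,a_k,b_1,\dots,b_k$ on which to base your index rules. What those rules actually use is only which side of $\ell(a_i,b_i)$ each $a_j$ and $b_j$ lies on, and that pattern holds under a different labelling. Orient $e_i$ from $a_i$ to $b_i$, put $u_i=b_i-a_i$, and index so that the directions of $u_1,\dots,u_k$ increase within a half-open interval of length $\pi$ (crossing segments are not parallel). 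For $i<j$ this gives $\det(u_i,u_j)>0$. Write $a_j=o-tu_j$ and $b_j=o+su_j$, where $o$ is the crossing point of $e_i$ and $e_j$ and $s,t>0$. Then $\det(u_i,a_j-a_i)=-t\det(u_i,u_j)<0$ and $\det(u_i,b_j-a_i)=s\det(u_i,u_j)>0$, and for $j<i$ both signs flip. Thus the endpoints of other segments lying to the right of $\vec{a_ib_i}$ are exactly the $a_j$ with $j>i$ and the $b_j$ with $j<i$. Your rule therefore says precisely this: endpoints on the right go to $a_i$, and those on the left go to $b_i$. Your case check for edge-disjointness is purely combinatorial and then goes through unchanged.

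Second, you leave the planarity of $T_i$ as the main obstacle, and you do not carry out the remedies you list (adjusting thresholds, convex-position geometry, thinning the family). None of them is needed. All forced endpoints on one side already go to the same center, so send every point of $P\setminus\{a_i,b_i\}$, including the points of $R$, to $a_i$ if it lies to the right of $\vec{a_ib_i}$ and to $b_i$ otherwise. The two stars lie in opposite closed half-planes and meet $\ell(a_i,b_i)$ only at $a_i$ and $b_i$, respectively, so $T_i$ is plane. As you observed, edges with an endpoint in $R$ are never shared. There are also two smaller problems. Your sentence that exactly one of $T_i,T_j$ takes all four edges between $\{a_i,b_i\}$ and $\{a_j,b_j\}$ cannot hold: each double star attaches both endpoints of the other segment, so each tree takes exactly two of these edges, as in your concrete rule. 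And a single angular threshold gives a plane tree only if the split is taken in the correct direction. With these repairs, your argument becomes a self-contained proof of the double-star construction of Aichholzer et al.\ that the paper simply cites and applies to the crossing family of Theorem~\ref{thm:main}. No passage to a subfamily is needed.
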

\begin{proof}
In the proof of~\cite[Theorem~1]{AHK}, Aichholzer et al. construct $k$ edge-disjoint plane spanning trees from a crossing family of size $k$. Each tree is a \emph{double star}: it consists of one edge of the crossing family together with an edge from every other vertex to one of its endpoints. Thus every tree has diameter at most three. We apply this construction to a crossing family of size $k\ge cn$, chosen by Theorem~\ref{thm:main}.
\end{proof}

\begin{remark}
A \emph{spoke set} for $P$ is a set of pairwise nonparallel lines such that every open unbounded region of their arrangement contains a point of $P$. By slightly rotating the supporting lines of a crossing family of size $k$ about the segment midpoints, one obtains a spoke set of size $k$; see~\cite{Schnider}. Applying this construction to a crossing family from Theorem~\ref{thm:main}, we obtain a spoke set of size at least $cn$ for every general-position set of $n\ge2$ points. This answers the question of Bose et al.~\cite[Problem~17]{BHRW} affirmatively.
\end{remark}

\section{Defects in the dual arrangement}\label{sec:dual}
To prove Proposition~\ref{prop:deletion}, fix separated $m$-point sets $A,B$, where $m\ge1$, whose union is in general position.

We represent a point $p=(p_x,p_y)$ by the line
\[
p^*(x)=p_xx-p_y,
\]
identifying the line with its height as a function of $x$. Lines representing points of $A$ are colored red, and those representing points of $B$ blue. Once the coordinates are chosen so that the lines have distinct slopes, each joining segment is represented by the intersection of its two lines.

Since $A$ and $B$ are separated, we can rotate and translate the configuration so that every point of $A$ has negative first coordinate and every point of $B$ has positive first coordinate. These first coordinates are the slopes of the corresponding dual lines. Hence every red slope is negative and every blue slope is positive. This will make all red--blue intersections behave in the same way when we move from left to right.

For vectors $u=(u_x,u_y)$ and $v=(v_x,v_y)$, write $\det(u,v)=u_xv_y-u_yv_x$. The sign of $\det(q-p,z-p)$ records on which side of the directed line from $p$ to $q$ the point $z$ lies: positive is left and negative is right.

\Needspace{8\baselineskip}
\begin{lemma}
\label{lem:side-tests}
Let $X$ be a finite point set in general position.
\begin{enumerate}
\item For distinct $p,q\in X$ and a nonempty set $Y\subseteq X\setminus\{p,q\}$, the line $\ell(p,q)$ meets $\conv(Y)$ if and only if $Y$ has points on both sides of the line.
\item Two segments with four distinct endpoints in $X$ cross in their relative interiors if and only if the endpoints of each segment lie on opposite sides of the supporting line of the other.
\end{enumerate}
\end{lemma}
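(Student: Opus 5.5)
Both parts follow from elementary convexity and the sign function $\sigma(z)=\operatorname{sgn}\det(q-p,z-p)$. This function is affine in $z$, and by general position it is nonzero on $X\setminus\{p,q\}$. Throughout, I use that $\ell(p,q)=\{z:\det(q-p,z-p)=0\}$ and that the two open half-planes are $\{\det>0\}$ and $\{\det<0\}$.

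For part~1, suppose first that $Y$ has points $y_1,y_2$ on opposite sides. Then $f(z)=\det(q-p,z-p)$ is affine with $f(y_1)>0>f(y_2)$. By continuity along the segment $[y_1,y_2]$, there is a zero of $f$ on it, and this zero lies in $\ell(p,q)\cap\conv(Y)$. Conversely, suppose all points of $Y$ lie on one side, say $f>0$ on $Y$. Every point of $\conv(Y)$ is a convex combination of points of $Y$, and $f$ is affine, so $f>0$ on all of $\conv(Y)$. Hence the line misses $\conv(Y)$. General position is used only to guarantee that no point of $Y$ lies on the line, so "both sides" versus "one side" is exhaustive.

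For part~2, let the segments be $[a,b]$ and $[c,d]$ with four distinct endpoints. If the endpoints of each segment lie on opposite sides of the other's line, then part~1 (or the same continuity argument) gives a point $x\in[a,b]\cap\ell(c,d)$. Because $a,b\notin\ell(c,d)$, the point $x$ lies in the relative interior of $[a,b]$. Symmetrically, there is a point $x'\in(c,d)\cap\ell(a,b)$.

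Next I show $x=x'$. The lines $\ell(a,b)$ and $\ell(c,d)$ are distinct, since otherwise four points would be collinear, and they are not parallel, since they meet. So they intersect in a single point. Both $x$ and $x'$ lie on both lines, so they coincide, and this common point lies in both relative interiors.

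Conversely, suppose the relative interiors share a point $x$. Then $x\in\ell(c,d)$, and $x=(1-\lambda)a+\lambda b$ with $0<\lambda<1$. Applying $f=\det(d-c,\cdot-c)$ gives $0=(1-\lambda)f(a)+\lambda f(b)$. Both $f(a)$ and $f(b)$ are nonzero by general position, so they have opposite signs. The same argument applies with the roles of the segments exchanged.

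The only delicate point is excluding degenerate configurations: collinear endpoints, or an intersection at an endpoint. These are handled uniformly by general position together with distinctness of the four endpoints, so I expect no real obstacle.
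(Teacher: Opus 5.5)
Your proof is correct and follows essentially the same route as the paper. Part~1 uses convexity of open half-planes together with a sign change along a segment. Part~2 uses the fact that each side condition places the unique intersection point of the two distinct, nonparallel supporting lines in the relative interior of one segment, and gets the converse from the affine-combination sign argument; you just spell out details the paper leaves implicit.
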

\begin{proof}
No point of $Y$ lies on $\ell(p,q)$. If $Y$ lies in one open half-plane, so does its convex hull; otherwise a segment joining two points of $Y$ on opposite sides meets the line. This proves the first assertion.

For the second, each side condition places the unique intersection of the supporting lines in the relative interior of one segment, so together they give a proper crossing. Conversely, a segment crossing a line in its relative interior has endpoints on opposite sides, since neither endpoint lies on the line.
\end{proof}

Both tests depend only on the orientation signs of triples. We can therefore perturb the points to examine dual intersections one at a time, provided these signs and separation are preserved. We say that a line is \emph{spanned} by a point set if it contains two of its points.

\begin{samepage}
\begin{lemma}
\label{lem:generic-position}
After rotating and translating the configuration, we can perturb the labelled points arbitrarily slightly so that every point of $A$ has negative first coordinate, every point of $B$ has positive first coordinate, all first coordinates are distinct, and no two distinct spanned lines are parallel. Applying the perturbation preserves separation, all triple-orientation signs, the avoidance defect $t(A,B)$, and every proper crossing relation between segments with distinct endpoint labels.
\end{lemma}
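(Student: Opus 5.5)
First apply a rigid motion. Since $\conv(A)$ and $\conv(B)$ are disjoint compact convex sets, a strict separating line exists. We rotate so that this line is vertical and translate so that it is the $y$-axis, with $A$ on the left. Rigid motions preserve orientations of triples, and hence everything in the statement. After this step every point of $A$ has negative first coordinate and every point of $B$ has positive first coordinate, both strictly. The set $A\cup B$ is still in general position.

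Next comes a stability observation. All the listed properties are determined by finitely many strict inequalities in the coordinates, so they persist under sufficiently small perturbations. Concretely:
\begin{itemize}
\item Each triple-orientation sign is the sign of a nonzero polynomial $\det(q-p,z-p)$, so it is preserved for small perturbations.
\item Strict negativity or positivity of first coordinates is an open condition.
\item Separation follows from the strict signs of the first coordinates, because the $y$-axis still separates the two sets.
\end{itemize}
By Lemma~\ref{lem:side-tests}(1), whether $\ell(a,a')$ meets $\conv(B)$ depends only on the orientation signs of $a,a',b$ for $b\in B$. The same holds with the roles of $A$ and $B$ exchanged, so $t(A,B)$ is preserved. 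By Lemma~\ref{lem:side-tests}(2), the proper crossing relation between segments with four distinct endpoints is also a function of orientation signs, so it is preserved too. I would therefore choose $\eta>0$ smaller than the minimum over all these finitely many strict inequalities of the slack they allow. Any perturbation of each point by less than $\eta$ then preserves everything.

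The remaining step is to achieve the genericity conditions inside the $\eta$-ball. We need distinct first coordinates and no two distinct spanned lines parallel. Consider the coordinate vector in $\mathbb R^{4m}$. The condition $p_x=q_x$ for distinct labels is a proper hyperplane. Parallelism of $\ell(p,q)$ and $\ell(r,s)$ for distinct pairs means $\det(q-p,s-r)=0$, which is a polynomial in the coordinates. This polynomial is not identically zero: if the pairs share a point, say $p=r$, we move $s$ off the direction of $q-p$; if the pairs are disjoint, we move $s$ alone. So each bad set is the zero set of a nonzero polynomial. There are finitely many such sets, each closed and nowhere dense. Their union therefore misses some point in every open ball, and we choose the perturbation there.

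The main obstacle is the bookkeeping in the stability step, namely making sure every preserved quantity really is a function of strict sign data. I would handle this through Lemma~\ref{lem:side-tests}, together with the observation that general position and the nonemptiness of $Y$ are untouched by the perturbation. The case of two pairs that share a point in the parallelism polynomial also needs its small separate check.
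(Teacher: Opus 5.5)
Your proof is correct, but it reaches the genericity conditions by a different route from the paper. The paper starts with the same rigid motion and the same small disjoint disks on which coordinate signs and orientation signs persist. It then places the new points one at a time. Each new point avoids the vertical lines through earlier points, the lines through two earlier points, and the lines through an earlier point parallel to a line spanned by two earlier points. Finitely many lines cannot cover an open disk, so a choice always exists.

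You instead work globally in the configuration space $\mathbb R^{4m}$. Each forbidden condition ($p_x=q_x$, or $\det(q-p,s-r)=0$ for two distinct pairs) is the zero set of a polynomial that is not identically zero. So the finite union of these closed sets has empty interior and misses some point of the product of the $\eta$-disks.

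Your route is shorter and treats all conditions uniformly. Its cost is invoking the fact that a nonzero polynomial cannot vanish on an open set, and checking non-vanishing case by case. The paper's route needs only the elementary fact about lines in a disk. Its sequential order also handles pairs sharing an endpoint automatically, through the exclusion of collinearity with earlier points.

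Your shared-endpoint check is simpler than you make it. There $\det(q-p,s-p)$ is an orientation determinant, so it is already nonzero at the current configuration by general position.
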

\end{samepage}
\begin{proof}
Rotate and translate a strict separating line to the vertical axis, with $A$ on its left and $B$ on its right. These strict coordinate inequalities and all triple-orientation signs persist in sufficiently small, pairwise disjoint open disks around the labelled points.

Within these disks, choose the new points successively. At each choice, avoid the vertical lines through previously chosen points, the lines through pairs of previously chosen points, and the lines through a previously chosen point parallel to a line through two previously chosen points. There are only finitely many forbidden lines, so they cannot cover the open disk. The new spanned lines are not parallel to any earlier spanned line, nor to one another, since collinearity with earlier points is excluded.

The points remain separated by the vertical axis. By Lemma~\ref{lem:side-tests}, their unchanged orientation signs preserve the defect and all proper crossing relations. Keeping endpoint labels thus transfers crossing families back with the same cardinalities.
\end{proof}

We work with the perturbed sets, retaining the notation $A,B$, and write $\cR,\cB$ for their red and blue line families. Distinct first coordinates give distinct dual slopes, and general position ensures that no three dual lines pass through the same point. If $p^*\cap q^*=(s,h)$, then the supporting line $\ell(p,q)$ has equation $y=sx-h$. In particular, the horizontal coordinate $s$ is its slope. Since distinct spanned lines are not parallel, all dual intersections have distinct horizontal coordinates.

We will also use the relation between sides in the two planes. For any point $z$, its signed vertical difference from the line $y=sx-h$ is
\begin{equation}\label{eq:side}
z_y-(sz_x-h)=-\bigl(z^*(s)-h\bigr).
\end{equation}
Thus a point above the original supporting line corresponds to a dual line below $(s,h)$, and a point below it corresponds to a dual line above.

Under the bijection described above, the $m^2$ joining segments correspond to the red--blue intersections.

\medskip
Next, we group the intersections into $2m-1$ classes. An intersection $q=(s,h)$ has \emph{level $k$} if exactly $k-1$ lines $\lambda$ satisfy $\lambda(s)<h$; that is, their heights at horizontal coordinate $s$ are strictly less than $h$. There are $2m$ lines in total, of which exactly two pass through the intersection, so $1\le k\le2m-1$.

To study one level, we move a vertical line from left to right through the arrangement. Away from intersections, let $L_k(x)$ be the set of the $k$ lowest lines at horizontal coordinate $x$. We track which lines belong to this set. At an intersection with horizontal coordinate $s$, write $L_k(s^-)$ and $L_k(s^+)$ for its membership immediately before and after the intersection. The coordinate preparation ensures that only one intersection is passed at a time. We record the possible changes in the following lemma.

\begin{lemma}
\label{lem:sweep-rule}
At an intersection of level $k$, the two lines through it exchange ranks $k$ and $k+1$: one leaves $L_k$, and the other enters. Intersections at other levels leave $L_k$ unchanged. At a red--blue level-$k$ intersection, the red line enters $L_k$ and the blue line leaves it.
\end{lemma}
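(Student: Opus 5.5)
The plan is to work locally near the horizontal coordinate $s$ of a single intersection $q=(s,h)$ of two lines $\lambda,\mu$. By Lemma~\ref{lem:generic-position}, the first coordinates are distinct and no two spanned lines are parallel. Hence all intersections have distinct horizontal coordinates, and no three lines are concurrent. So there is an $\eta>0$ such that no other intersection has horizontal coordinate in $[s-\eta,s+\eta]$. On this interval every line other than $\lambda,\mu$ keeps its strict vertical order relative to all other lines. The only change in the sorted order of the $2m$ lines as $x$ passes $s$ is therefore the swap of $\lambda$ and $\mu$.

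Since $\lambda,\mu$ meet only at $s$, they are adjacent in the order just before and just after $s$. The line with larger slope is below on the left and above on the right.

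\textbf{Level $k$ intersection.} If $q$ has level $k$, exactly $k-1$ lines lie strictly below $h$ at $s$, and by continuity they stay below both $\lambda$ and $\mu$ throughout the interval. So $\lambda,\mu$ occupy ranks $k$ and $k+1$ on both sides, in swapped order. The line of rank $k$ at $s^-$ is in $L_k(s^-)$ but not in $L_k(s^+)$, and the other line does the opposite. Every other line keeps its rank, so its membership in $L_k$ is unchanged. This gives the first assertion.

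\textbf{Intersection of level $j\ne k$.} Here the swapped pair occupies ranks $j,j+1$. If $j<k$, both ranks are at most $k$; if $j>k$, both are at least $k+1$. In either case the set of lines occupying ranks $1,\dots,k$ is the same before and after, so $L_k(s^-)=L_k(s^+)$.

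\textbf{Red--blue intersection.} The red line has negative slope and the blue line positive slope. To the left of $s$, the line of larger slope is lower, so the blue line is lower; to the right the red line is lower. So at a red--blue level-$k$ intersection, the blue line has rank $k$ before and $k+1$ after, and thus leaves $L_k$, while the red line enters.

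The only step needing care is the existence of the isolating interval and the rank-preservation claim, which rest on the distinctness of intersection abscissae from Lemma~\ref{lem:generic-position} and continuity of the linear height functions. Everything else is a direct count.
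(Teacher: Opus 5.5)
Your proof is correct and follows the same route as the paper's. The $k-1$ lines below the intersection, together with the absence of a third line through it, put the two lines in ranks $k,k+1$ with their order exchanged; a swap at another level stays entirely inside or entirely outside $L_k$; and the slope signs decide which color is lower on the right. You make the isolating interval explicit where the paper leaves it implicit, though note that non-concurrency of three dual lines comes from the no-three-collinear hypothesis (preserved by the perturbation), not from distinct slopes and non-parallel spanned lines as your ``Hence'' suggests.
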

\begin{proof}
There are $k-1$ lines below a level-$k$ intersection and no third line through it. The two lines through this intersection therefore occupy adjacent ranks $k$ and $k+1$ immediately before and after the event, and their ranks are exchanged. An intersection at another level swaps two lines that are either both in $L_k$ or both outside it, so membership in $L_k$ does not change.

At a red--blue level-$k$ intersection, the red line has negative slope and the blue line has positive slope. As we pass the intersection from left to right, the red line therefore moves from above the blue line to below it. Consequently, the red line enters $L_k$ and the blue line leaves it.
\end{proof}

We say that a red--red intersection is \emph{defective} if some blue line lies strictly above it and some blue line lies strictly below it. Similarly, define defective blue--blue intersections by interchanging the colors. Let $T_k$ be the number of defective same-color intersections of level $k$.

\begin{lemma}
\label{lem:defect-correspondence}
For distinct $a_1,a_2\in A$, the red--red intersection $a_1^*\cap a_2^*$ is defective if and only if
\[
\ell(a_1,a_2)\cap\conv(B)\ne\varnothing.
\]
The analogous statement holds for pairs in $B$. Each defective same-color intersection arises from exactly one such unordered pair and we have
\begin{equation}\label{eq:sumT}
\sum_{k=1}^{2m-1}T_k=t(A,B).
\end{equation}
\end{lemma}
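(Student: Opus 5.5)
The argument combines the side relation \eqref{eq:side} with part~1 of Lemma~\ref{lem:side-tests}, and then counts.

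Fix distinct $a_1,a_2\in A$. Let $(s,h)=a_1^*\cap a_2^*$, so that $\ell(a_1,a_2)$ is the line $y=sx-h$.

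First, apply Lemma~\ref{lem:side-tests}(1) with $X=A\cup B$, $p=a_1$, $q=a_2$ and $Y=B$. This is allowed because $B$ is nonempty (as $m\ge1$) and disjoint from $\{a_1,a_2\}$. It says that $\ell(a_1,a_2)$ meets $\conv(B)$ if and only if $B$ has points strictly on both sides of $\ell(a_1,a_2)$. General position of $A\cup B$ guarantees that no point of $B$ lies on the line, so every point of $B$ is strictly on one side.

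Second, translate sides into the dual plane using \eqref{eq:side}. For $z\in B$, the quantity $z_y-(sz_x-h)$ equals $-(z^*(s)-h)$. Hence $z$ lies strictly above $\ell(a_1,a_2)$ exactly when the blue line $z^*$ passes strictly below $(s,h)$, and $z$ lies strictly below the line exactly when $z^*$ passes strictly above $(s,h)$. Consequently, $B$ has points on both sides of $\ell(a_1,a_2)$ if and only if some blue line lies strictly above the intersection and some lies strictly below it. That is precisely the definition of $a_1^*\cap a_2^*$ being defective. The blue--blue case is the same argument with the colors and the roles of $A$ and $B$ interchanged. All this is done in the perturbed configuration; Lemma~\ref{lem:generic-position} guarantees that $t(A,B)$ is unchanged, so the count may be carried out there.

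Third, verify the bijection and the sum. The map $\{a_1,a_2\}\mapsto a_1^*\cap a_2^*$ is injective, for two reasons. Distinct points give distinct dual lines, since they have distinct slopes. No three dual lines are concurrent, so each intersection point determines the unordered pair of lines through it. Moreover, red--red and blue--blue intersections are distinguished by the colors of their lines. Thus defective same-color intersections are in bijection with the pairs counted in \eqref{eq:defect}. Each intersection has exactly one level $k\in\{1,\dots,2m-1\}$, so summing $T_k$ over $k$ counts every defective intersection exactly once. This gives \eqref{eq:sumT}.

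The only delicate point is the second step: the sign reversal between "above the primal line" and "below the dual point" must be handled correctly. Because defectiveness is symmetric under swapping above and below, the reversal does not actually affect the conclusion.
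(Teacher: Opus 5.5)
Your proposal is correct and follows essentially the same route as the paper's proof. Both arguments use part~1 of the side-test lemma to reduce to $B$ having points on both sides of $\ell(a_1,a_2)$, then use the sign identity \eqref{eq:side} to translate this into blue lines above and below $(s,h)$, and finally count using injectivity of the pair-to-intersection map and uniqueness of levels; you simply make explicit some details the paper leaves implicit, such as nonemptiness of $B$ and why no three concurrent lines makes the map injective.
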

\begin{proof}
Let $(s,h)=a_1^*\cap a_2^*$. By Lemma~\ref{lem:side-tests}, the line $\ell(a_1,a_2)$ meets $\conv(B)$ exactly when $B$ has points on both sides of it. By~\eqref{eq:side}, this happens exactly when blue lines lie both above and below $(s,h)$. Thus the pair $\{a_1,a_2\}$ contributes to $t(A,B)$ precisely when its red--red intersection is defective. The same reasoning applies to pairs in $B$ and blue--blue intersections.

Distinct unordered pairs give distinct intersections, and each intersection has exactly one level. Counting the defective intersections over all levels therefore counts every pair contributing to $t(A,B)$ exactly once, giving~\eqref{eq:sumT}.
\end{proof}

\section{Deletion at each level}\label{sec:level}
Now fix a level $k$. We will use its $T_k$ defective intersections to discard at most $2T_k$ joining segments. Throughout the argument, levels and $L_k(x)$ are computed in the original arrangement of all $2m$ lines and are not recomputed after deletions.

\paragraph{The marking rule.}
By Lemma~\ref{lem:sweep-rule}, a red line enters $L_k$ at each of its red--blue intersections of level $k$. To participate a second time, it must first leave $L_k$. Similarly, a blue line leaves $L_k$ at each red--blue intersection of level $k$, so it must enter $L_k$ again before the next such intersection. These exits and entrances motivate the following rule, applied at intersections of level $k$:
\begin{itemize}
\item At a defective red--red intersection, we mark the red line that leaves $L_k$.
\item At a defective blue--blue intersection, we mark the blue line that enters $L_k$.
\item At all other intersections, we assign no marks.
\end{itemize}
Each defective intersection at level $k$ gives one mark, and a line may receive several marks. A line is called \emph{marked} at this level if it receives at least one.

Marks are assigned separately at each level, and all marks are assigned before any intersections are retained. We say that a red--blue level-$k$ intersection is \emph{clean} if neither of the two lines through it is marked at this level. Note that here a mark affects every red--blue intersection on its line at that level, including earlier ones. We discard precisely the non-clean intersections and their corresponding joining segments. We will show that the retained segments have distinct endpoints and cross pairwise.
%Discarding every appearance of a marked line ensures that both lines at each retained intersection are unmarked. This condition will rule out both repeated endpoints and the transitions that would prevent two surviving segments from crossing.

When $t(A,B)=0$, there are no defects and hence no marks: the rule keeps every joining segment.

\begin{lemma}
\label{lem:marked-transition}
Suppose a red line $r$ lies strictly between two blue lines throughout an open interval of horizontal coordinates. Every exit of $r$ from $L_k$ within that interval gives a mark to $r$. Similarly, if a blue line $b$ lies strictly between two red lines throughout such an interval, every entrance of $b$ into $L_k$ within the interval gives a mark to $b$. In particular, an unmarked line cannot make the corresponding transition.
\end{lemma}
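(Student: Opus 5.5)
We follow a single exit of $r$ from $L_k$ and identify the event that causes it. Let $x_0$ be a horizontal coordinate in the interval at which $r$ leaves $L_k$. By Lemma~\ref{lem:sweep-rule}, membership in $L_k$ changes only at level-$k$ intersections, and there the two lines through the intersection swap ranks $k$ and $k+1$. So $x_0$ is the horizontal coordinate of a level-$k$ intersection $q=r\cap\lambda$ with some other line $\lambda$, at which $r$ leaves and $\lambda$ enters. We will show that $\lambda$ is red and that $q$ is a defective red--red intersection. The marking rule then assigns a mark to the red line leaving $L_k$ at $q$, namely $r$.

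First, $\lambda$ cannot be blue. At a red--blue level-$k$ intersection, Lemma~\ref{lem:sweep-rule} says the red line enters $L_k$, whereas here $r$ leaves it. Hence $\lambda$ is red and $q$ is a red--red intersection of level $k$.

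Next we check defectiveness. Let $b_1,b_2$ be the blue lines with $b_1(x)<r(x)<b_2(x)$ throughout the open interval. Since $x_0$ lies in the interval, $b_1(x_0)<r(x_0)=h<b_2(x_0)$, where $q=(x_0,h)$. These inequalities are strict and come directly from the hypothesis, so no limiting argument is needed. Thus blue lines lie strictly above and strictly below $q$, and $q$ is defective. The marking rule at level $k$ therefore marks the red line leaving $L_k$ at $q$, which is $r$.

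The blue case is symmetric. If $b$ enters $L_k$ at $x_0$, the partner line $\lambda$ leaves. If $\lambda$ were red, then by Lemma~\ref{lem:sweep-rule} the red line would enter and the blue line would leave, a contradiction. So $\lambda$ is blue, the intersection is blue--blue of level $k$, and the red lines strictly above and below $b$ at $x_0$ make it defective. The rule marks the entering blue line, which is $b$.

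The final assertion is the contrapositive: any such transition inside the interval produces a mark, so an unmarked line cannot make it. The only delicate point is ensuring the exit happens at an intersection whose horizontal coordinate lies inside the open interval. This holds by definition of an exit within the interval, and the distinct horizontal coordinates from Lemma~\ref{lem:generic-position} make the event unique.
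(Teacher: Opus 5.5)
Your proposal is correct and follows essentially the same argument as the paper. You use Lemma~\ref{lem:sweep-rule} to rule out a red--blue intersection as the site of the exit (respectively entrance), which leaves a same-color level-$k$ intersection; the two flanking opposite-color lines make it defective, so the marking rule marks the transitioning line.
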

\begin{proof}
At a red--blue level-$k$ intersection, the red line enters $L_k$ and the blue line leaves it, by Lemma~\ref{lem:sweep-rule}. Consequently, a red line can leave $L_k$ only at a red--red intersection of level $k$. The two blue lines lie on opposite sides of that intersection, making it defective, and the rule marks the exiting red line. Likewise, a blue entrance must occur at a blue--blue intersection of level $k$. The two red lines make it defective, so the entering blue line receives a mark.
\end{proof}

\begin{figure}[tb]
\centering
\begin{tikzpicture}[x=3.5cm,y=.75cm,>=Stealth,
  every node/.style={font=\small}]
\begin{scope}
\clip (-1.3,-2.7) rectangle (1,2.9);
\draw[blue!75!black,thick] (-1.3,.7)--(1,3);
\draw[blue!75!black,thick] (-1.3,-4.6)--(1,0);
\draw[red!75!black,thick] (-1.3,3.9)--(1,-3);
\draw[red!75!black,thick] (-1.3,1.3)--(1,-1);
\draw[red!75!black,line width=1.7pt] (-1,1)--(0,0);
\draw[red!75!black,line width=1.7pt] (.666667,-.666667)--(1,-1);
\end{scope}
\node[blue!75!black,below left] at (-1.25,.75) {$b_i$};
\node[blue!75!black,right] at (1,0) {$b_{i+1}$};
\node[red!75!black,right] at (1,-1) {$r$};
\node[red!75!black,above] at (-.85,2.55) {$r'$};
\fill (-1,1) circle (1.6pt);
\fill (.666667,-.666667) circle (1.6pt);
\draw[fill=white,line width=.8pt] (0,0) circle (3pt);
\draw[->] (.24,.9)--(.025,.06);
\node[anchor=south west,align=left] at (.24,.9) {$r$ exits;\\mark $r$};
\draw[densely dotted] (-1,.95)--(-1,-3);
\draw[densely dotted] (0,-.1)--(0,-3);
\draw[densely dotted] (.666667,-.72)--(.666667,-3);
\draw[->,gray] (-1.3,-3)--(1.05,-3) node[right] {$x$};
\node[below,align=center] at (-1,-3) {$s_i$\\$r$ enters};
\node[below] at (0,-3) {$r$ exits};
\node[below,align=center] at (.666667,-3) {$s_{i+1}$\\$r$ enters};
\end{tikzpicture}
\caption{Two consecutive red--blue intersections of $r$ at level $k$ (here $k=2$), with horizontal coordinates $s_i$ and $s_{i+1}$. The bold portions indicate where $r$ belongs to $L_k(x)$, the set of the $k$ lowest lines. Between its two entrances, the line $r$ exits at a red--red intersection. At that exit, the blue line $b_i$ lies above $r$ and the blue line $b_{i+1}$ lies below it, so the intersection is defective and the marking rule marks $r$.}
\label{fig:marking}
\end{figure}

\paragraph{Repeated appearances and the number of deletions.}
For a line $\lambda\in\cR\cup\cB$, let $d_k(\lambda)$ be the number of red--blue level-$k$ intersections on it, and let $u_k(\lambda)$ be the number of marks it receives at this level. We next show that every appearance after the first requires a distinct mark; see Figure~\ref{fig:marking} for a sketch of the red-line case.

\begin{lemma}
\label{lem:repeat}
With the fixed arrangement and level-$k$ marking rule above, every line $\lambda\in\cR\cup\cB$ satisfies
\[
d_k(\lambda)\le1+u_k(\lambda).
\]
\end{lemma}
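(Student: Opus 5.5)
We argue line by line. Fix $\lambda$ and suppose first that $\lambda=r$ is red. List its red--blue level-$k$ intersections in increasing order of horizontal coordinate, $s_1<s_2<\dots<s_d$ with $d=d_k(r)$, and let $b_i$ be the blue line through $r$ at $s_i$. If $d\le1$ there is nothing to prove. Otherwise we produce, for each $i$ with $1\le i<d$, a mark received by $r$ at a point strictly between $s_i$ and $s_{i+1}$. These marks are distinct because they occur at distinct intersections in disjoint open intervals. Summing over the $d-1$ gaps then gives $u_k(r)\ge d-1$.

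Fix $i$. By Lemma~\ref{lem:sweep-rule}, $r$ enters $L_k$ at $s_i$, so $r\in L_k(s_i^+)$. It also enters at $s_{i+1}$, so $r\notin L_k(s_{i+1}^-)$. Since membership changes only at level-$k$ intersections through $r$, there is an exit of $r$ from $L_k$ at some coordinate in $(s_i,s_{i+1})$. To invoke Lemma~\ref{lem:marked-transition}, we need $r$ to lie strictly between two blue lines throughout the open interval $(s_i,s_{i+1})$.

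The two blue lines are $b_i$ and $b_{i+1}$. Just after $s_i$ the red line has passed below $b_i$, since red slopes are negative and blue slopes positive. Because these are lines of opposite slope signs, $r$ stays below $b_i$ for all $x>s_i$. Symmetrically, $r$ lies above $b_{i+1}$ for all $x<s_{i+1}$. On $(s_i,s_{i+1})$ we therefore have $b_{i+1}<r<b_i$, and Lemma~\ref{lem:marked-transition} gives a mark to $r$ at the exit. The step needing care is exactly this global sign argument: two lines cross only once, so the relative order fixed at $s_i$ persists on the whole interval. This is what makes the red--red exit defective, as in Figure~\ref{fig:marking}.

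The blue case is symmetric. Let $b$ be blue with red--blue level-$k$ intersections at $s_1<\dots<s_d$ against red lines $r_i$. At each $s_i$ the line $b$ leaves $L_k$, so between $s_i$ and $s_{i+1}$ it must enter $L_k$. On that interval $b$ lies above $r_i$, since after crossing the blue line rises over the red one, and below $r_{i+1}$. Lemma~\ref{lem:marked-transition} then marks $b$ at its entrance. Again each gap yields a distinct mark, so $d_k(b)\le 1+u_k(b)$.
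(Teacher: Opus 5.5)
Your proposal is correct and follows essentially the same argument as the paper. In both, consecutive red--blue level-$k$ appearances force an exit (for a red line) or an entrance (for a blue line) in between. The slope-sign argument sandwiches the line strictly between its two partners on that interval, and Lemma~\ref{lem:marked-transition} then supplies a distinct mark in each of the $d-1$ disjoint gaps.
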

\begin{proof}
Fix a red line $r$, and let $s_1<\cdots<s_d$ be the horizontal coordinates of its red--blue level-$k$ intersections, with blue partners $b_1,\ldots,b_d$. There is nothing to prove if $d\le1$. Between its entrances at $s_i$ and $s_{i+1}$, the line $r$ must leave $L_k$. Moreover, since red slopes are negative and blue slopes are positive, we have
\[
b_i(x)>r(x)>b_{i+1}(x)\qquad(s_i<x<s_{i+1}).
\]
By Lemma~\ref{lem:marked-transition}, the line $r$ receives a mark at an exit in each of these disjoint intervals, so $u_k(r)\ge d-1$.

A blue line $b$ must enter $L_k$ between any two consecutive exits. Its red partners satisfy
\[
r_i(x)<b(x)<r_{i+1}(x),
\]
throughout the interval between their intersections with $b$. By Lemma~\ref{lem:marked-transition}, the line $b$ receives a mark at an entrance in each of these disjoint intervals, proving the assertion for both colors.
\end{proof}

There are two consequences. If $\lambda$ is unmarked, then $u_k(\lambda)=0$ and $d_k(\lambda)\le1$. Thus no line occurs in two retained intersections at this level, and the corresponding segments have distinct endpoints. If $\lambda$ is marked, then $u_k(\lambda)\ge1$, so
\[
d_k(\lambda)\le1+u_k(\lambda)\le2u_k(\lambda).
\]
%This second inequality bounds the number of intersections discarded because of a marked line in terms of its number of marks.

\begin{lemma}
\label{lem:count}
At most $2T_k$ red--blue level-$k$ intersections are not clean.
\end{lemma}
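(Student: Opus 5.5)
A non-clean red--blue level-$k$ intersection has at least one marked line through it. We therefore bound the number of non-clean intersections by summing, over marked lines, the number of red--blue level-$k$ intersections on each:
\[
\#\{\text{non-clean}\}\le\sum_{\lambda\text{ marked}} d_k(\lambda).
\]
This overcounts an intersection whose two lines are both marked, which only helps.

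For each marked line $\lambda$ we have $u_k(\lambda)\ge1$. Lemma~\ref{lem:repeat} then gives $d_k(\lambda)\le1+u_k(\lambda)\le2u_k(\lambda)$, as noted right after that lemma. Hence
\[
\sum_{\lambda\text{ marked}} d_k(\lambda)\le 2\sum_{\lambda\in\cR\cup\cB}u_k(\lambda).
\]
Unmarked lines contribute $u_k=0$ to the right-hand side, so summing over all lines changes nothing.

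It remains to identify $\sum_\lambda u_k(\lambda)$. By the marking rule, each defective same-color intersection of level $k$ gives exactly one mark, to exactly one line. Every other intersection, including every intersection at a different level, gives none. Hence the total number of marks at level $k$ equals $T_k$, and the bound $2T_k$ follows.

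The only point needing care is this one-to-one correspondence between marks and defects. We must check that the rule really is well defined, meaning that at a level-$k$ red--red intersection exactly one of the two lines leaves $L_k$. This is guaranteed by Lemma~\ref{lem:sweep-rule}, since the two lines exchange ranks $k$ and $k+1$. The same lemma handles blue--blue intersections, where exactly one line enters $L_k$. With that in place, the argument is just the summation above, and I expect no real obstacle.
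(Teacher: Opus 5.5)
Your proposal is correct and follows essentially the same argument as the paper: bound the non-clean intersections by $\sum_{\lambda\text{ marked}} d_k(\lambda)$, apply $d_k(\lambda)\le 2u_k(\lambda)$ from Lemma~\ref{lem:repeat}, and use that the total number of marks at level $k$ is $T_k$. Your extra check, via Lemma~\ref{lem:sweep-rule}, that the marking rule is well defined is a harmless addition that the paper leaves implicit.
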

\begin{proof}
Let $S=\{\lambda\in\cR\cup\cB:u_k(\lambda)>0\}$ be the set of marked lines. Each defective intersection gives one mark, so the total number of marks is $T_k$:
\[
\sum_{\lambda\in S}u_k(\lambda)=T_k.
\]
Every discarded intersection lies on at least one marked line. Using the inequality just established, the number discarded is therefore at most
\[
\sum_{\lambda\in S}d_k(\lambda)
\le2\sum_{\lambda\in S}u_k(\lambda)=2T_k.
\]
An intersection on two marked lines may be counted twice, which is harmless for an upper bound.
\end{proof}

\section{From clean intersections to crossing families}\label{sec:crossing}
It remains to show that the segments corresponding to clean intersections at level $k$ cross pairwise. By Lemma~\ref{lem:side-tests} and the duality relation~\eqref{eq:side}, this amounts to showing that, at either intersection, the two lines through the other intersection lie on opposite sides of it.

\begin{lemma}
\label{lem:order}
Let $q_1=(s_1,h_1)=r_1\cap b_1$ and $q_2=(s_2,h_2)=r_2\cap b_2$ be distinct clean red--blue intersections at level $k$, where $s_1<s_2$, the lines $r_1,r_2$ are red, and $b_1,b_2$ are blue. At the earlier intersection $q_1$, the lines through $q_2$ satisfy
\begin{equation}\label{eq:earlier}
r_2(s_1)>h_1>b_2(s_1).
\end{equation}
At the later intersection $q_2$, the lines through $q_1$ satisfy
\begin{equation}\label{eq:later}
b_1(s_2)>h_2>r_1(s_2).
\end{equation}
\end{lemma}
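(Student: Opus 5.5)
The idea is to turn each inequality into a statement about membership in $L_k$ just after $s_1$ or just before $s_2$. Any violation then forces an unmarked line to make a forbidden transition inside $(s_1,s_2)$, which contradicts Lemma~\ref{lem:marked-transition}.

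\textbf{Preliminaries.} Since $q_1,q_2$ are clean, all four lines $r_1,b_1,r_2,b_2$ are unmarked at level $k$. By Lemma~\ref{lem:repeat}, each appears in at most one red--blue level-$k$ intersection, so $r_1\ne r_2$ and $b_1\ne b_2$. As no three dual lines are concurrent, $r_2$ and $b_2$ do not pass through $q_1$, and $r_1,b_1$ do not pass through $q_2$. All four inequalities are therefore either strict or fail strictly.

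By Lemma~\ref{lem:sweep-rule}, $L_k(s_1^+)$ consists of the $k-1$ lines below $h_1$ at $s_1$ together with $r_1$. Likewise, $L_k(s_2^-)$ consists of the $k-1$ lines below $h_2$ at $s_2$ together with $b_2$. Because $r_2$ enters $L_k$ at $s_2$ and $b_1$ leaves it at $s_1$, we have $r_2\notin L_k(s_2^-)$ and $b_1\notin L_k(s_1^+)$. Finally, since red slopes are negative and blue slopes positive, a red and a blue line meeting at coordinate $s$ have the red line above the blue one to the left of $s$, and below it to the right.

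\textbf{The inequalities \eqref{eq:earlier} at $q_1$.}

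First, suppose $r_2(s_1)<h_1$. Then $r_2\in L_k(s_1^+)$ but $r_2\notin L_k(s_2^-)$, so $r_2$ exits $L_k$ at some point of $(s_1,s_2)$. On this interval $r_2$ lies strictly between two blue lines:
\begin{itemize}
\item $b_2(x)<r_2(x)$ for $x<s_2$, by the slope rule;
\item $b_1(x)>r_2(x)$ for $x>s_1$, since $r_2(s_1)<h_1=b_1(s_1)$, $b_1$ is increasing and $r_2$ is decreasing.
\end{itemize}
Lemma~\ref{lem:marked-transition} then marks $r_2$, a contradiction. Hence $r_2(s_1)>h_1$.

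Second, suppose $b_2(s_1)>h_1$. Then $b_2\notin L_k(s_1^+)$ but $b_2\in L_k(s_2^-)$, so $b_2$ enters $L_k$ in $(s_1,s_2)$. On this interval $b_2$ lies strictly between two red lines:
\begin{itemize}
\item $r_1(x)<b_2(x)$ for $x>s_1$, from $b_2(s_1)>h_1=r_1(s_1)$ and the slopes;
\item $r_2(x)>b_2(x)$ for $x<s_2$.
\end{itemize}
Lemma~\ref{lem:marked-transition} then marks $b_2$, again a contradiction. Hence $b_2(s_1)<h_1$.

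\textbf{The inequalities \eqref{eq:later} at $q_2$.} This is the mirror argument.

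First, suppose $b_1(s_2)<h_2$. Then $b_1\in L_k(s_2^-)$ but $b_1\notin L_k(s_1^+)$, so $b_1$ enters $L_k$ in $(s_1,s_2)$. There $b_1$ lies strictly between two red lines: $r_1$ below it for $x>s_1$, and $r_2$ above it for $x<s_2$, the latter because $b_1(s_2)<h_2=r_2(s_2)$. This marks $b_1$, a contradiction.

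Second, suppose $r_1(s_2)>h_2$. Then $r_1\notin L_k(s_2^-)$ but $r_1\in L_k(s_1^+)$, so $r_1$ exits $L_k$ in $(s_1,s_2)$. There $r_1$ lies strictly between two blue lines: $b_1$ above it for $x>s_1$, and $b_2$ below it for $x<s_2$. This marks $r_1$, a contradiction.

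\textbf{Main obstacle.} The delicate point is producing the second bracketing line in each case. This needs the comparison at $s_1$ or $s_2$ (the negated inequality) combined with monotonicity of slopes, as in the bullet points above. A second point needing care is justifying that distinct clean intersections use four distinct lines, so that equality cases are excluded; this is exactly what Lemma~\ref{lem:repeat} provides.
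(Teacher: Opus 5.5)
Your proposal is correct and follows the paper's proof essentially step for step. It uses the same four contradiction cases: each negated inequality becomes a change of membership between $L_k(s_1^+)$ and $L_k(s_2^-)$, and the offending unmarked line is bracketed between two opposite-colour lines on $(s_1,s_2)$ so that Lemma~\ref{lem:marked-transition} would mark it. The differences are cosmetic: you describe $L_k(s_1^+)$ and $L_k(s_2^-)$ explicitly, and in the final case you should add, as the paper does, that $b_2<r_1$ on $(s_1,s_2)$ follows from $r_1(s_2)>h_2=b_2(s_2)$ together with the negative slope of $r_1-b_2$.
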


\begin{proof}
An unmarked line has $u_k=0$ and hence passes through at most one red--blue level-$k$ intersection by Lemma~\ref{lem:repeat}. Since all four lines through these two intersections are unmarked, they are distinct. No third line passes through either intersection, so none of the comparisons in the statement can be an equality. We prove the four strict comparisons in turn. In each case, the opposite inequality would require an unmarked line to make a transition that is impossible by Lemma~\ref{lem:marked-transition}.

\smallskip\noindent\emph{At the earlier intersection.}
Suppose that $r_2(s_1)<h_1$. Then $r_2\in L_k(s_1^+)$, whereas $r_2\notin L_k(s_2^-)$ because $r_2$ enters at $s_2$. Thus $r_2$ must leave $L_k$ somewhere in $(s_1,s_2)$. Throughout this interval, we have
\[
b_1(x)>r_2(x)>b_2(x).
\]
The first inequality follows from $r_2(s_1)<b_1(s_1)$ and the negative slope of $r_2-b_1$; the second holds before the intersection of $r_2$ and $b_2$ at $s_2$. By Lemma~\ref{lem:marked-transition}, the exit gives a mark to $r_2$. This contradicts the fact that $q_2$ is clean, proving $r_2(s_1)>h_1$.

Suppose that $b_2(s_1)>h_1$. Then $b_2\notin L_k(s_1^+)$, whereas $b_2\in L_k(s_2^-)$ because $b_2$ leaves at $s_2$. Thus the line $b_2$ must enter $L_k$ somewhere in $(s_1,s_2)$. Throughout $(s_1,s_2)$, we have
\[
r_1(x)<b_2(x)<r_2(x).
\]
The first inequality follows from $b_2(s_1)>r_1(s_1)$ and the positive slope of $b_2-r_1$; the second holds before the intersection at $s_2$. By Lemma~\ref{lem:marked-transition}, the line $b_2$ receives a mark at the entrance, again contradicting the fact that $q_2$ is clean. Hence $b_2(s_1)<h_1$, completing the two comparisons in~\eqref{eq:earlier}.

\smallskip\noindent\emph{At the later intersection.}
Suppose that $r_1(s_2)>h_2$. Then $r_1\in L_k(s_1^+)$ because it enters at $s_1$, but $r_1\notin L_k(s_2^-)$. Thus the line $r_1$ must leave $L_k$ somewhere in $(s_1,s_2)$. Throughout $(s_1,s_2)$, we have
\[
b_1(x)>r_1(x)>b_2(x).
\]
The first inequality holds after the intersection at $s_1$. For the second inequality, note that $r_1(s_2)>b_2(s_2)$ and that the slope of $r_1-b_2$ is negative, so
$$r_1(x)-b_2(x)>r_1(s_2)-b_2(s_2)>0$$
for $x<s_2$. By Lemma~\ref{lem:marked-transition}, the line $r_1$ receives a mark at the exit, contradicting the fact that $q_1$ is clean. Hence $r_1(s_2)<h_2$.

Finally, suppose that $b_1(s_2)<h_2$. Then $b_1\notin L_k(s_1^+)$ because it leaves at $s_1$, but $b_1\in L_k(s_2^-)$. Thus the line $b_1$ must enter $L_k$ somewhere in $(s_1,s_2)$. Throughout $(s_1,s_2)$, we have
\[
r_1(x)<b_1(x)<r_2(x).
\]
The first inequality holds after the intersection at $s_1$. For the second inequality, note that $b_1(s_2)<r_2(s_2)$ and that the slope of $b_1-r_2$ is positive, so
$$b_1(x)-r_2(x)<b_1(s_2)-r_2(s_2)<0$$
for $x<s_2$. By Lemma~\ref{lem:marked-transition}, the line $b_1$ receives a mark at the entrance, again contradicting the fact that $q_1$ is clean. Thus $b_1(s_2)>h_2$, completing the two comparisons in~\eqref{eq:later}.
\end{proof}

\begin{corollary}
\label{lem:crossing}
The joining segments corresponding to the clean red--blue intersections at level $k$ have distinct endpoints and cross pairwise in their interiors.
\end{corollary}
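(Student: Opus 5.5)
Distinct endpoints follow directly from Lemma~\ref{lem:repeat}. Each line through a clean intersection is unmarked, so $u_k=0$ and $d_k\le1$. Hence no dual line lies on two clean red--blue level-$k$ intersections. Since each dual line represents a single labelled point, the segments corresponding to any two distinct clean intersections share no endpoint.

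For the crossing property, fix two distinct clean intersections. Their horizontal coordinates are distinct, since all dual intersections have distinct horizontal coordinates. So we may label them $q_1=r_1\cap b_1$ and $q_2=r_2\cap b_2$ with $s_1<s_2$, exactly as in Lemma~\ref{lem:order}. Let $a_i\in A$ and $b'_i\in B$ be the points dual to $r_i,b_i$. The corresponding segments are $a_1b'_1$ and $a_2b'_2$, with four distinct endpoints. By Lemma~\ref{lem:side-tests}(2), they cross in their relative interiors if and only if two conditions hold. First, $a_2,b'_2$ must lie on opposite sides of $\ell(a_1,b'_1)$. Second, $a_1,b'_1$ must lie on opposite sides of $\ell(a_2,b'_2)$.

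We translate these side conditions through duality. The supporting line $\ell(a_1,b'_1)$ is $y=s_1x-h_1$. By \eqref{eq:side}, a point $z$ lies strictly above this line exactly when $z^*(s_1)<h_1$, and strictly below it exactly when $z^*(s_1)>h_1$. Inequality \eqref{eq:earlier} gives $r_2(s_1)>h_1>b_2(s_1)$. So $a_2$ lies below $\ell(a_1,b'_1)$ and $b'_2$ lies above it, which means they are on opposite sides. In the same way, $\ell(a_2,b'_2)$ is $y=s_2x-h_2$, and \eqref{eq:later} gives $b_1(s_2)>h_2>r_1(s_2)$. So $b'_1$ lies below this line and $a_1$ above it. Both side conditions hold, and the two segments cross. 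No point lies on either line: equality would put a third dual line through $q_1$ or $q_2$, which general position excludes.

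The substantive work is in Lemma~\ref{lem:order}, so no step here is a real obstacle. The only care needed is bookkeeping in three places:
\begin{itemize}
\item matching the perturbed points with the labelled originals, so that by Lemma~\ref{lem:generic-position} the crossing relations transfer back;
\item getting the sign reversal in \eqref{eq:side} right;
\item checking that the roles of $q_1$ and $q_2$ in Lemma~\ref{lem:order} cover every pair, which they do after ordering by horizontal coordinate.
\end{itemize}
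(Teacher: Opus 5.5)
Your proof is correct and follows essentially the same route as the paper. Distinct endpoints come from Lemma~\ref{lem:repeat} applied to unmarked lines, and the crossing follows by converting \eqref{eq:earlier} and \eqref{eq:later}, through the sign reversal in \eqref{eq:side}, into the two opposite-side conditions of Lemma~\ref{lem:side-tests}.
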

\begin{proof}
Since both lines through each clean intersection are unmarked, the corresponding segments have distinct endpoints by Lemma~\ref{lem:repeat}. Choose any two clean intersections and use the notation of Lemma~\ref{lem:order}. Let $a_1,a_2\in A$ be dual to $r_1,r_2$ and let $c_1,c_2\in B$ be dual to $b_1,b_2$. We must show that $[a_1,c_1]$ and $[a_2,c_2]$ cross in their relative interiors.

The supporting line $\ell(a_1,c_1)$ is $y=s_1 x-h_1$. By~\eqref{eq:side}, a point in the original plane lies above this line exactly when its dual line has height less than $h_1$ at $s_1$. By~\eqref{eq:earlier}, the points $a_2$ and $c_2$ therefore lie strictly on opposite sides of $\ell(a_1,c_1)$. Applying the same identity at $(s_2,h_2)$ and using~\eqref{eq:later}, we find that $a_1$ and $c_1$ lie strictly on opposite sides of $\ell(a_2,c_2)$.

Both side conditions in Lemma~\ref{lem:side-tests} hold, so the two segments cross in their relative interiors. This applies to every pair of retained segments at the level.
\end{proof}

\begin{proof}[Proof of Proposition~\ref{prop:deletion}]
Note that it suffices to work in the perturbed configuration of Lemma~\ref{lem:generic-position}. For $1\le k\le2m-1$, let $\cF_k$ consist of the joining segments corresponding to clean red--blue intersections of level $k$. By Corollary~\ref{lem:crossing}, each $\cF_k$ is a crossing family. These families are pairwise disjoint, since every red--blue intersection has a unique level.

By Lemma~\ref{lem:count} and~\eqref{eq:sumT}, we have
\[
\sum_{k=1}^{2m-1}|\cF_k|
\ge m^2-2\sum_{k=1}^{2m-1}T_k=m^2-2t(A,B).
\]
Thus at most $2t(A,B)$ joining segments are deleted, and averaging proves~\eqref{eq:bound}. If $t(A,B)\le m^2/4$, then
\[
\frac{m^2-2t(A,B)}{2m-1}\ge\frac{m^2}{2(2m-1)}\ge\frac m4.
\]
Finally, we transfer the families to the original configuration by their endpoint labels, as in Lemma~\ref{lem:generic-position}, preserving their cardinalities, crossing relations, and partition of the retained segments.
\end{proof}

\paragraph{Acknowledgments.}
Most importantly, I want to thank Raphael Steiner for suggesting that ProofCouncil should be run on this problem and for his comments on an earlier version of this article. I am also very grateful to the ProofCouncil team, specifically Johannes Schmitt, Jasper Dekoninck, David Holmes and Yonggang Jiang for being involved with the continued development of ProofCouncil. I also thank OpenAI for providing a free ChatGPT account through the ChatGPT for Academic Researchers program. 

\printbibliography
\end{document}